 \newtheoremstyle{mytheorem}
 {3pt}
 {3pt}
 {\slshape}
 {}
 {\bfseries}
 {.}
 { }
 {}
\numberwithin{equation}{section}
\theoremstyle{theorem}
\newtheorem{theorem}{Theorem}[section]
\newtheorem{lemma}[theorem]{Lemma}
\theoremstyle{definition}
\newtheorem{example}{Example}[section]
\renewcommand{\MR}[1]{\href{http://www.ams.org/mathscinet-getitem?mr=#1}{MR#1}}
\newcommand{\Keywords}[1]{\ifthenelse{\isempty{#1}}{}{\smallskip \smallskip \noindent \textbf{Keywords}. #1}}
\newcommand{\MSC}[2][2010]{\ifthenelse{\isempty{#2}}{}{\smallskip \smallskip \noindent \textbf{#1MSC}. #2}}
\newcommand{\abstractnote}[1]{\ifthenelse{\isempty{#1}}{}{\smallskip \smallskip \noindent \textsuperscript{\dag}#1}}
\def\specialsection{\@startsection{section}{1}%
  \z@{\linespacing\@plus\linespacing}{.5\linespacing}%
  {\normalfont}}
\def\section{\@startsection{section}{1}%
  \z@{.7\linespacing\@plus\linespacing}{.5\linespacing}%
  {\normalfont\scshape}}
\patchcmd{\@settitle}{\uppercasenonmath\@title}{\Large\boldmath}{}{}
\patchcmd{\@settitle}{\begin{center}}{\begin{flushleft}}{}{}
\patchcmd{\@settitle}{\end{center}}{\end{flushleft}}{}{}
\patchcmd{\@setauthors}{\MakeUppercase}{\normalsize}{}{}
\patchcmd{\@setauthors}{\centering}{\raggedright}{}{}
\patchcmd{\section}{\scshape}{\large\bfseries\boldmath}{}{}
\patchcmd{\subsection}{\bfseries}{\bfseries\boldmath}{}{}
\renewcommand{\@secnumfont}{\bfseries}
\patchcmd{\@startsection}{\@afterindenttrue}{\@afterindentfalse}{}{}
\patchcmd{\abstract}{\leftmargin3pc}{\leftmargin1pc}{}{}
\def\maketitle{\par
  \@topnum\z@ 
  \@setcopyright
  \thispagestyle{empty}
  \ifx\@empty\shortauthors \let\shortauthors\shorttitle
  \else \andify\shortauthors
  \fi
  \@maketitle@hook
  \begingroup
  \@maketitle
  \toks@\@xp{\shortauthors}\@temptokena\@xp{\shorttitle}%
  \toks4{\def\\{ \ignorespaces}}
  \edef\@tempa{%
    \@nx\markboth{\the\toks4
      \@nx\MakeUppercase{\the\toks@}}{\the\@temptokena}}%
  \@tempa
  \endgroup
  \c@footnote\z@
  \@cleartopmattertags
}
\newcommand{\la}{\lambda}
\newcommand{\og}{\overline{\mathcal{G}}}
\newcommand{\op}{\overline{\mathcal{P}}}
\newcommand{\ob}{\overline{\mathcal{B}}}
\newcommand{\ov}[1]{\overline{#1}}
\title[Overpartitions with bounded part differences]{Overpartitions with bounded part differences}
\author[S. Chern]{Shane Chern}
\address[S. Chern]{Department of Mathematics, The Pennsylvania State University, University Park, PA 16802, USA}
\email{shanechern@psu.edu; chenxiaohang92@gmail.com}
\author[A. J. Yee]{Ae Ja Yee}
\address[A. J. Yee]{Department of Mathematics, The Pennsylvania State University, University Park, PA 16802, USA}
\email{auy2@psu.edu}
\date{}
\begin{document}

{\footnotesize\noindent \textit{European J. Combin.} \textbf{70} (2018), 317--324. \MR{3779621}.\\
\doi{10.1016/j.ejc.2018.01.003}}

\bigskip \bigskip

\maketitle

\begin{abstract}

We generalize recent results of Breuer and Kronholm, and Chern on partitions and overpartitions with bounded differences between largest and smallest parts. We prove our generalization both analytically and combinatorially.  

\Keywords{Partitions, overpartitions, bounded difference between largest and smallest parts, combinatorial proof.}

\MSC{Primary 05A17; Secondary 05A19, 11P84.}

\end{abstract}


\section{Introduction}


A \textit{partition} of a positive integer $n$ is a non-increasing sequence of positive integers whose sum equals
 $n$. For example, there are three partitions of $3$: $3$, $2+1$, and $1+1+1$. 
 
In \cite{ABR2015}, Andrews, Beck and Robbins explored partitions with the difference between largest and smallest parts equal to $t$ for some positive integer $t$. 
 Motivated by their work, Breuer and Kronholm \cite{BK2016} studied the number of partitions of $n$ with the difference between  largest and smallest parts bounded by $t$, and they showed that the generating function  is
\begin{equation}\label{eq:g1}
\sum_{n\ge 1}p_t(n) q^n=\frac{1}{1-q^t}\left(\frac{1}{(q)_t}-1\right),
\end{equation}
where $p_t(n)$ counts the number of partitions of $n$ with the difference between largest and smallest parts bounded by $t$, and 
$$(a)_n=(a;q)_n:=\prod_{k=0}^{n-1} (1-a q^k).$$

The proof of Breuer and Kronholm has a geometric flavour, and their main tool used in the proof is polyhedral cones. 
Subsequently, Chapman \cite{Cha2016} also provided a simpler proof, which involves $q$-series manipulations.

An \textit{overpartition} of $n$ is a partition of $n$ in which the first occurrence of each distinct part may be overlined. For example, there are eight overpartitions of $3$: $3$, $\overline{3}$, $2+1$, $\overline{2}+1$, $2+\overline{1}$, $\overline{2}+\overline{1}$, $1+1+1$, and $\overline{1}+1+1$.  

Recently, motivated by the works of Andrews, Beck and Robbins,  Breuer and Kronholm, and Chapman,  the first author \cite{Che2017} considered an overpartition analogue  with bounded difference between largest and smallest parts.

To obtain a generating function analogous to \eqref{eq:g1}, apart from requiring the difference between  largest and smallest parts less than or equal to $t$, the first author added the following restriction: if the difference between largest and smallest parts is exactly $t$, then the largest part cannot be overlined. Let $g_t(n)$ count the number of such overpartitions of $n$. Then it was shown that
\begin{equation}\label{eq:g2}
\sum_{n\ge 1}g_t(n) q^n=\frac{1}{1-q^t}\left(\frac{(-q)_t}{(q)_t}-1\right).
\end{equation}
The first proof in his paper uses heavy $q$-series manipulation, which originates from \cite{ABR2015}. His second proof, which consists of many combinatorial ingredients such as the overpartition analogue of $q$-binomial coefficients introduced by Dousse and Kim \cite{DK2017}, however, still needs some nontrivial computation, and hence it is not completely combinatorial.


The main purpose of this paper is to provide a completely combinatorial and transparent proof of 
\eqref{eq:g2}.  More precisely, we prove the following refined result.

\begin{theorem}\label{th:1.1}
For a positive integer $t$, let $g_t(m,n)$ count the number of overpartitions of $n$ in which there are exactly $m$ overlined parts,  the difference between largest and smallest parts  is at most $t$, and if the difference between largest and smallest parts is exactly $t$, then the largest part cannot be overlined. Then
\begin{equation}\label{eq:th1.1}
\sum_{n\ge 1}\sum_{m\ge 0} g_t(m,n)z^mq^n=\frac{1}{1-q^t}\left(\frac{(-zq)_t}{(q)_t}-1\right).
\end{equation}
\end{theorem}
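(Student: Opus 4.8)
The plan is to give a direct combinatorial decomposition of the overpartitions counted by $g_t(m,n)$. Fix the smallest part, say it equals $s\ge 1$. Every part then lies in the interval $\{s, s+1, \dots, s+t\}$, and if the part $s+t$ appears it may not be overlined; moreover the part $s$ must actually occur. I would split each such overpartition into two pieces: the ``base'' consisting of all parts equal to $s$ (at least one, possibly one of them overlined), and the ``remainder'' consisting of all parts strictly greater than $s$, each of which lies in $\{s+1,\dots,s+t\}$ with $s+t$ not overlined. The base contributes $z$-weighted, $q$-weighted factor $(1+z)q^s/(1-q^s)$: there is one mandatory part $s$ which is overlined or not (factor $1+z$ for that single first occurrence), times $1/(1-q^s)$ for the arbitrarily many additional non-overlined copies of $s$.

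Next I would encode the remainder. The parts available are $s+1, s+2, \dots, s+t-1$, each of which may be overlined (first occurrence) and repeated, together with $s+t$, which may be repeated but never overlined. Summing over all multiplicities, the remainder's generating function is
\begin{equation}\label{eq:rem}
\frac{1}{1-q^{s+t}}\prod_{j=1}^{t-1}\frac{1+zq^{s+j}}{1-q^{s+j}}.
\end{equation}
Multiplying the base factor by \eqref{eq:rem} and summing over $s\ge 1$ should produce the left-hand side of \eqref{eq:th1.1}. The core identity to verify is therefore
\begin{equation}\label{eq:core}
\sum_{s\ge 1}\frac{(1+z)q^s}{1-q^s}\cdot\frac{1}{1-q^{s+t}}\prod_{j=1}^{t-1}\frac{1+zq^{s+j}}{1-q^{s+j}}
=\frac{1}{1-q^t}\left(\frac{(-zq)_t}{(q)_t}-1\right).
\end{equation}

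To prove \eqref{eq:core} I expect a telescoping argument to be cleanest. Write $P_s=\prod_{j=0}^{t}\frac{1+zq^{s+j}}{1-q^{s+j}}$ with the convention that the factor $j=0$ carries numerator $1+zq^s$ but we will have to be careful since the term $s+t$ contributes only $1/(1-q^{s+t})$, not $(1+zq^{s+t})/(1-q^{s+t})$; so in fact the natural quantity is $R_s:=\frac{1}{1-q^{s+t}}\prod_{j=0}^{t-1}\frac{1+zq^{s+j}}{1-q^{s+j}}$, and one checks directly that the summand on the left of \eqref{eq:core} equals $z\,R_s$ plus a multiple of $R_s-R_{s+1}$, so that the sum telescopes, leaving $R_1$-type boundary data together with a residual $z\sum_{s\ge1}R_s$; iterating (or rather observing the sum over $s$ of the telescoped differences collapses) yields $\frac{1}{1-q^t}\big(\prod_{j=1}^{t}\frac{1+zq^j}{1-q^j}-1\big)=\frac{1}{1-q^t}\big(\frac{(-zq)_t}{(q)_t}-1\big)$. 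The main obstacle, and the place requiring genuine care rather than routine algebra, is getting the telescoping decomposition of the summand exactly right: the asymmetry caused by the ``$s+t$ cannot be overlined'' rule means the factor at the top of each window is truncated, and one must find the precise constants $a(z),b(z)$ with $\frac{(1+z)q^s}{1-q^s}R'_s = a R_s + b(R_s-R_{s+1})$ (where $R'_s$ is the product over $j=1,\dots,t-1$) so that the residual sum is again of the same telescoping type. Once that algebraic identity is pinned down, summing over $s\ge1$ and matching against the right-hand side of \eqref{eq:th1.1} is immediate, and the combinatorial meaning of each factor makes the argument transparent, as promised in the introduction. Setting $z=1$ recovers \eqref{eq:g2}, and setting $z=0$ collapses the overlines and recovers \eqref{eq:g1}.
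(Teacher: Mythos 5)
Your setup is correct and is exactly the first step of the paper's own analytic proof in Section~\ref{sec3}: classifying the overpartitions in $\og_t$ by their smallest part $s$ gives
$\sum_{s\ge 1}\frac{(1+z)q^s}{1-q^s}\cdot\frac{1}{1-q^{s+t}}\prod_{j=1}^{t-1}\frac{1+zq^{s+j}}{1-q^{s+j}}$
for the left-hand side of \eqref{eq:th1.1}. The gap is at the step you yourself call the main obstacle: you never exhibit the telescoping identity, and the ansatz you write down cannot work. With your $R_s=\frac{1}{1-q^{s+t}}\prod_{j=0}^{t-1}\frac{1+zq^{s+j}}{1-q^{s+j}}$ there are no $a,b$ independent of $s$ with (summand) $=aR_s+b(R_s-R_{s+1})$: putting everything over $\prod_{j=0}^{t+1}(1-q^{s+j})$ and cancelling $\prod_{j=1}^{t-1}(1+zq^{s+j})$, the difference $R_s-R_{s+1}$ produces the bracket $q^s(1-q^{t+1})+zq^s(1-q^t)+zq^{2s+t}(1-q)$, whose $q^{2s}$ term scales differently in $s$ from the $q^{s}$ terms, forcing $a=0$ and two incompatible conditions on $b$. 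Moreover, any decomposition leaving a residual $z\sum_{s\ge 1}R_s$ does not close the argument; the appeal to ``iterating'' is where the proof stops being a proof.

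The good news is that your telescoping idea does work once the truncated top factor is removed from the telescoping quantity: telescope the window of length $t$, not $t+1$. Set $C_s=\prod_{j=0}^{t-1}\frac{1+zq^{s+j}}{1-q^{s+j}}$. Since $(1+zq^s)(1-q^{s+t})-(1+zq^{s+t})(1-q^s)=(1+z)q^s(1-q^t)$, one gets
\begin{equation*}
C_s-C_{s+1}=(1-q^t)\,\frac{(1+z)q^s\prod_{j=1}^{t-1}(1+zq^{s+j})}{\prod_{j=0}^{t}(1-q^{s+j})},
\end{equation*}
so each summand equals $\frac{1}{1-q^t}\left(C_s-C_{s+1}\right)$, and summing over $s\ge 1$ telescopes (coefficientwise $C_s\to 1$) to $\frac{1}{1-q^t}\left(C_1-1\right)=\frac{1}{1-q^t}\left(\frac{(-zq)_t}{(q)_t}-1\right)$, which is \eqref{eq:th1.1}. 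With this repair your route is genuinely different from both proofs in the paper --- the combinatorial one via the maps $\phi$ and $\psi$ and the count of pre-images, and the analytic one via the ${}_3\phi_2$ transformation \eqref{eq:32} and the $q$-Chu--Vandermonde sum \eqref{eq:chu} --- and it is more elementary: it is the direct extension of Chapman's telescoping proof of \eqref{eq:g1} to overpartitions, the $(1+z)$ factor cancelling precisely because the largest admissible part $s+t$ carries no overline. As submitted, however, the crucial identity is missing and the specific decomposition you propose would fail.
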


We remark that \eqref{eq:g1} and \eqref{eq:g2} follow immediately from \eqref{eq:th1.1} by taking $z\to 0$ and $z\to 1$, respectively. 

\subsection{Notation and terminology}

Throughout this paper, $\mathbb{Z}_{\ge 0}$ and $\mathbb{Z}_{>0}$ denote the set of nonnegative integers and positive integers, respectively. Given a partition or an overpartition $\la$ of $n$, let $\ell(\la)$ be the number of parts of $\la$ and $|\la|=n$ be the sum of the parts of $\la$. When $\la$ is an overpartition, we use $o(\la)$ to count the number of overlined parts in $\la$. We write parts in weakly decreasing order.

For a positive integer $t$, we denote by $\op_t$ the set of (nonempty) overpartitions with parts less than or equal to $t$ and no parts equal to $t$ overlined, and by $\og_t$ the set of (nonempty) overpartitions with the difference between largest and smallest parts at most $t$ and the largest part not overlined when the difference between largest and smallest parts is exactly $t$.  
Also,  $\ob_t$ denotes the set of bipartitions where the first subpartition, which can be an empty partition, consists of only parts equal to $t$, none overlined, and the second subpartition is a nonempty overpartition with parts less than or equal to $t$.  

The rest of this paper is organized as follows. In Section~\ref{sec2.1}, we first construct a weight preserving map $\phi$ from $\og_t$ to $\op_t$.  In Section~\ref{sec2.2}, we then construct another weight preserving map $\psi$ from $\op_t$ to $\ob_t$. Finally, by combining these two maps, we will deduce that $\og_t$ and $\ob_t$ have the same generating functions:
\begin{equation*}
\sum_{\pi\in \og_t} z^{o(\pi)} q^{|\pi |} = \sum_{\beta \in \ob_t} z^{o(\beta )} q^{|\beta |},
\end{equation*}
which is indeed equivalent to Theorem~\ref{th:1.1}. In Section~\ref{sec3}, a $q$-series proof of Theorem~\ref{th:1.1} will be given.

\section{A combinatorial approach}

\subsection{Partition sets $\og_t$ and $\op_t$} \label{sec2.1}

For an overpartition $\pi=(\pi_1,\pi_2,\ldots,\pi_\ell)$ in $\og_t$, let $s(\pi)=\lfloor \pi_{\ell}/t\rfloor $, where $\lfloor a\rfloor$ denotes the largest integer not exceeding $a$, and let $k(\pi)$ be the positive integer $k$ such that  $\pi_k\ge (s(\pi)+1)t$ and $\pi_{k+1}<(s(\pi)+1)t$. If there is no such $k$, then we let $k(\pi)=0$.  

We now define a map $\phi: \og_t\to \op_t$ as follows. For an overpartition $\pi \in \og_t$, 
let $\ell(\pi)=\ell$, $s(\pi)=s$ and $k(\pi)=k$.  Then
\begin{align*}
&\phi\colon (\pi_1,\pi_2,\ldots,\pi_\ell)\\
&\quad \mapsto (\underbrace{t,t,t,\ldots,t}_{\tiny \substack{s(\ell-k)+(s+1)k\\\text{times}}},\pi_{k+1}-st,\ldots,\pi_\ell-st,\pi_1-(s+1)t,\ldots,\pi_k-(s+1)t),
\end{align*}
where all the parts equal to $t$  are not overlined, and if $\pi_i$ is overlined, then $\pi_i-st$ (or $\pi_i-(s+1)t$ depending on the value of $i$) is overlined.
In other words, $\phi$ takes $\pi$ to $(t,t,\ldots,t,a_1,\ldots,a_\ell)$ where $a_1,\ldots,a_\ell$ are $\pi_1,\ldots,\pi_\ell$ reduced modulo $t$, cyclically permuted to make them weakly decreasing.

Here we note that there may be parts equal to $0$ in $\phi(\pi)$. If there are any parts equal to $0$, then we delete them so that $\phi(\pi)$ has positive parts only.

\begin{theorem}
$\phi$ is a weight preserving map from $\og_t$ to $\op_t$. 
\end{theorem}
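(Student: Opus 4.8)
The plan is to verify directly the two things that "weight preserving map from $\og_t$ to $\op_t$" requires: first, that $\phi(\pi)$ genuinely lies in $\op_t$ for every $\pi\in\og_t$, and second, that $|\phi(\pi)|=|\pi|$ and $o(\phi(\pi))=o(\pi)$. The second claim is the easier one and I would dispatch it first. Writing $\pi=(\pi_1,\dots,\pi_\ell)$ with $s=s(\pi)$ and $k=k(\pi)$, the parts of $\pi$ with index $\le k$ satisfy $\pi_i\ge(s+1)t$ and those with index $>k$ satisfy $st\le\pi_i<(s+1)t$ (using that $\pi_\ell\ge st$ by definition of $s$, and that the difference between largest and smallest parts is at most $t$, so $\pi_1\le\pi_\ell+t<(s+2)t$, forcing $\pi_i\le(s+1)t$ for $i>k$ — I should be a little careful here about the boundary case $\pi_1=(s+1)t$, which is exactly why $k$ is defined via strict inequality). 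Hence subtracting $st$ from the small parts and $(s+1)t$ from the large parts leaves nonnegative residues, and the total removed is $st(\ell-k)+(s+1)tk$, which is precisely $t$ times the number of parts equal to $t$ that $\phi$ inserts; so $|\phi(\pi)|=|\pi|$. Since overlines are carried along unchanged and the inserted $t$'s are non-overlined, $o$ is preserved as well.

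Next I would check membership in $\op_t$. Every residue $\pi_i-st$ or $\pi_i-(s+1)t$ lies in $\{0,1,\dots,t\}$, and after deleting zeros all parts are in $\{1,\dots,t\}$; the inserted parts are $t$. So the only thing to rule out is an \emph{overlined} part equal to $t$. This can only come from some $\pi_i$ with $\pi_i-st=t$, i.e. $\pi_i=(s+1)t$, with $i>k$ (the small block), or $\pi_i-(s+1)t=t$, i.e. $\pi_i=(s+2)t$, with $i\le k$. The latter is impossible because $\pi_i=(s+2)t\ge\pi_\ell+2t$ contradicts the bounded-difference hypothesis. For the former: if $\pi_i=(s+1)t$ with $i>k$, then in particular $\pi_\ell\le(s+1)t$, and since $\pi_\ell\ge st$ and $\pi_1-\pi_\ell\le t$ we get $\pi_1\le(s+2)t$; the case $\pi_1<(s+1)t$ would force $k=0$ and $i>0$ with $\pi_i=(s+1)t>\pi_1$, impossible, so $\pi_1\ge(s+1)t$ hence $k\ge1$ and in fact $\pi_1\le(s+1)t$ would be needed — I then land in the situation where the largest part is $(s+1)t$ and the smallest is exactly $(s+1)t-$ nothing, i.e. the difference $\pi_1-\pi_\ell$ equals $t$ exactly, and by the defining restriction of $\og_t$ the largest part $\pi_1$ is then \emph{not} overlined. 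But $\pi_i=(s+1)t=\pi_1$ and $\pi_i$ being a repeated occurrence of the value $\pi_1$, only the first occurrence may be overlined; combined with $\pi_1$ not overlined, no occurrence of $(s+1)t$ is overlined, so the produced part $t$ is not overlined. This is the crux of the argument.

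I expect the main obstacle to be exactly this last point: carefully arguing that a part equal to $t$ in $\phi(\pi)$ is never overlined. It forces one to untangle how the defining constraint on $\og_t$ (largest part not overlined when the spread is exactly $t$) interacts with the cyclic-shift/reduction, and to handle the degenerate cases $k=0$, $s=0$, and all-parts-equal separately. The arithmetic (weight preservation, residues in range) is routine bookkeeping by comparison. I would organize the write-up as: (1) identify the two index blocks and their ranges; (2) weight and overline count; (3) residues lie in $\{0,\dots,t\}$; (4) the no-overlined-$t$ argument, splitting on whether some residue equals $t$ and invoking the $\og_t$ restriction. A sanity check on a small example (e.g. $t=2$, $\pi=(\overline{4},3,3)$) at the end would confirm the map behaves as claimed.
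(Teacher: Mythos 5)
There is a genuine gap. Checking that $\phi(\pi)\in\op_t$ requires verifying not only that all parts are at most $t$ with no overlined $t$, but also that $\phi(\pi)$ is a legitimate overpartition, i.e.\ that no two parts of the same size are both overlined --- and you skip this entirely (``the only thing to rule out is an overlined part equal to $t$''). Yet this is precisely where the defining restriction of $\og_t$ enters. Within each of the two blocks the overlined residues have distinct sizes because they come from distinct part sizes of $\pi$, but across the blocks a collision occurs exactly when $\pi_\ell-st=\pi_1-(s+1)t$, i.e.\ when $\pi_1-\pi_\ell=t$: an overlined part equal to $\pi_\ell$ and an overlined part equal to $\pi_1=\pi_\ell+t$ would reduce to the same residue. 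For instance, with $t=3$ the (forbidden) sequence $(\ov{5},5,\ov{2})$ would map to $(3,3,\ov{2},\ov{2},2)$, which is not an overpartition; what rules this out is that when the spread is exactly $t$ no part equal to $\pi_1$ is overlined, so among the parts of $\phi(\pi)$ with this common value at most the first occurrence carries an overline. This boundary case is the actual content of the verification in the paper and is absent from your write-up.

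By contrast, the point you call the crux is vacuous: by the definition of $k(\pi)$, every part with index $>k$ satisfies $\pi_i<(s+1)t$, so no residue from the small block equals $t$, and a residue $\pi_i-(s+1)t=t$ in the large block would force $\pi_i=(s+2)t>\pi_\ell+t$, contradicting the bounded spread. Hence all residues are strictly less than $t$, the only parts equal to $t$ are the inserted, unoverlined ones, and no appeal to the $\og_t$ restriction is needed there; the paper disposes of this in one line via the chain $t>\pi_{k+1}-st\ge\cdots\ge\pi_\ell-st\ge\pi_1-(s+1)t\ge\cdots\ge\pi_k-(s+1)t$ (which also gives the weak decrease you never check). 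A further slip: your claim that $o(\phi(\pi))=o(\pi)$ is false (and not asserted by the theorem): an overlined multiple of $t$ in $\pi$ has residue $0$, which is deleted together with its overline, e.g.\ $(\ov{3},3,3)\mapsto(3,3,3)$ for $t=3$. Only the weight $|\pi|$ is claimed to be preserved, and that part of your argument is fine.
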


\begin{proof}
Since $\pi_1-\pi_{\ell}\le t$, $s=\lfloor \pi_{\ell}/t \rfloor$, and $\pi_{k}\ge (s+1)t >\pi_{k+1}$, we have
\begin{equation*}
t >\pi_{k+1}-st \ge \cdots \ge \pi_{\ell}-st \ge  \pi_{1}-(s+1)t\ge  \cdots \ge \pi_k -(s+1)t.
\end{equation*}
Thus the parts of $\phi(\pi)$ are less than or equal to $t$, and if there are overlined parts, they are less than $t$. 

We now show that no more than one part of the same size is overlined. Since $\pi$ is an overpartition, at most one part of the same size is overlined in $\pi$. Hence, of $\pi_{1}-st,\ldots, \pi_{k}-st$, if there are overlined parts, then they must be of different sizes.  For the same reason, of $\pi_{k+1}-(s+1)t,\ldots, \pi_{\ell}-(s+1)t$, overlined parts  must be of different sizes.  Thus, if $\pi_{\ell}-st>\pi_1-(s+1)t$, then it is clear that all the overlined parts of $\phi(\pi)$ have different sizes. 

Let us suppose that $\pi_{\ell}-st=\pi_1-(s+1)t$. Then, we have $\pi_1-\pi_{\ell}=t$.  By the definition of $\og_t$, we know that all the parts equal to  $\pi_1$ are not overlined. Thus, for parts in $\phi(\pi)$ that are equal to $\pi_\ell-st=\pi_1-(s+1)t$, either the first occurrence or none may be overlined.
Therefore, $\phi(\pi)\in \op_t$.

We also note that the map $\phi$ preserves the weight of $\pi$, that is, $|\phi(\pi)|=|\pi|$. 
\end{proof}

As we see in the following example, the map $\phi$ is not a bijection. 
\begin{example}
Let $t=3$, $\pi=(7,\ov{4})$ and $\tilde{\pi}=(\ov{4},4,3)$. Then
\begin{gather*}
s(\pi)=1,\quad k(\pi)=1,\quad \phi(\pi)=(3,3,3,\ov{1},1), \quad |\phi(\pi)|=|\pi|=11;\\
s(\tilde{\pi})=1,\quad k(\tilde{\pi})=0,\quad \phi(\tilde{\pi})=(3,3,3,\ov{1},1), \quad |\phi(\tilde{\pi})|=|\tilde{\pi}|=11.
\end{gather*}
\end{example}
However, $\phi$ is a surjection since $\op_t$ is a subset of $\og_t$ and $\phi(\pi)=\pi$ for any $\pi\in \op_t$. So, we will count how many pre-images each $\mu\in \op_t$  has under $\phi$.

Let $\pi\in \og_t$. We describe how to recover $\pi$ from $\phi(\pi)$.   First, note that it is clear from the definition of $s(\pi)$ and $k(\pi)$ that   $\pi_i-(s(\pi)+1)t$ and $\pi_j-s(\pi) t$ are the remainders of $\pi_i$ and $\pi_j$ when divided by $t$ for  $1\le i \le k(\pi)$ and $ j> k(\pi)$. 
If the remainders are equal to $0$,  then they are deleted in $\phi(\pi)$.  Thus if we know the number of such deleted remainders, we can determine $\ell(\pi)$.  Also, one of the deleted remainders may have been overlined. 

 We then need to find $s(\pi)$ and  $k(\pi)$,  where $s(\pi)$ is the quotient of the smallest part of $\pi$ when divided by $t$ and $k(\pi)$ counts the number of parts whose quotients are equal to $s(\pi)+1$.  
Therefore, once we have $\ell(\pi)$, $k(\pi)$, and $s(\pi)$ along with the information on existence of an overlined deleted remainder, it is clear that we can recover $\pi$. Thus possible choices for $\ell(\pi)$, $k(\pi)$, and $s(\pi)$  with having a deleted remainder overlined or not will determine the number of pre-images under $\phi$. 
 
In the following lemma, we will see the range for $\ell(\pi)$.  For any $\mu\in\op_t$, we use $m(\mu)=m_t(\mu)$ to count the number of parts of $\mu$ equal to $t$.

\begin{lemma}\label{le:3.2}
Let $\pi$ be a nonempty overpartition in $\og_t$ and $\mu=\phi(\pi)$ in $\op_t$. Then we have
\begin{enumerate}[\indent\rm(i)]
\item $\ell(\pi)\le \ell(\mu)$;
\item $\ell(\pi)\ge \ell(\mu)-m(\mu) + \delta_{\ell(\mu), m(\mu)}$, where $\delta_{\ell(\mu), m(\mu)}$ is the Kronecker delta.  
\end{enumerate}
\end{lemma}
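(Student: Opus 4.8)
The plan is to trace through the definition of $\phi$ and count parts carefully. Write $\ell = \ell(\pi)$, $s = s(\pi)$, $k = k(\pi)$. By construction, $\phi(\pi)$ has $s(\ell-k) + (s+1)k = s\ell + k$ copies of $t$ (before any zeros are deleted), followed by the $\ell$ values $\pi_{k+1}-st, \dots, \pi_\ell - st, \pi_1 - (s+1)t, \dots, \pi_k - (s+1)t$, each of which lies in $\{0,1,\dots,t-1\}$. Among these last $\ell$ values, let $d$ be the number that equal $0$; these are the deleted remainders. Hence $\ell(\mu) = s\ell + k + (\ell - d)$, so $\ell(\mu) - \ell = (s-1)\ell + k - d + \ell = \dots$ — more usefully, $\ell(\mu) = \ell + (s\ell + k - d)$, and I will show the quantity $s\ell + k - d$ is $\ge 0$, giving (i), and bound it above to get (ii).

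For (i): since the $d$ deleted zeros are among the $\ell$ reduced parts, we have $0 \le d \le \ell$, so $s\ell + k - d \ge s\ell + k - \ell = (s-1)\ell + k \ge 0$ whenever $s \ge 1$; and when $s = 0$ we have $\pi_\ell < t$, and I need $k - d \ge 0$. When $s=0$, the parts counted by $k$ are exactly those $\pi_i \ge t$; each such $\pi_i$ reduces to $\pi_i - t \in \{0,\dots,t-1\}$, while the remaining $\ell - k$ parts are already in $\{1,\dots,t-1\}$ (being $<t$ and positive), so they contribute no zeros. Thus all $d$ deleted zeros come from the $k$ parts that were $\ge t$, giving $d \le k$, hence $\ell(\mu) = k + (\ell - d) \ge \ell$. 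This settles (i) in all cases.

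For (ii): I want $\ell(\pi) \ge \ell(\mu) - m(\mu) + \delta_{\ell(\mu),m(\mu)}$, i.e. (generically) $\ell \ge \ell(\mu) - m(\mu)$. Here $m(\mu)$ is the number of parts of $\mu$ equal to $t$. Note that $\phi(\pi)$ can pick up parts equal to $t$ in two ways: the $s\ell + k$ "manufactured" copies of $t$, plus possibly some of the original parts $\pi_i$ that are already $\le t$ and equal to $t$ — but wait, those would have reduced to $0$ and been deleted, OR if $s \ge 1$ an original part can't equal $t$ after reduction unless it was a multiple of $t$. Let me instead argue directly: $\ell(\mu) - m(\mu)$ counts the parts of $\mu$ that are $< t$, and these are precisely the $\ell - d$ nonzero reduced remainders; so $\ell(\mu) - m(\mu) = \ell - d \le \ell = \ell(\pi)$, which is exactly (ii) without the delta correction. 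The delta term handles the boundary case where $\mu$ consists entirely of parts equal to $t$: then $\ell - d = 0$, meaning every reduced remainder is $0$, so $\ell(\mu) - m(\mu) = 0$ while we can still only conclude $\ell(\pi) \ge 0$; but $\pi$ is nonempty so $\ell(\pi) \ge 1$, matching $\ell(\mu) - m(\mu) + 1$. The main obstacle is bookkeeping the deleted-zero count $d$ correctly across the $s = 0$ and $s \ge 1$ cases and making sure the identification $\ell(\mu) - m(\mu) = \ell - d$ is airtight (in particular that no manufactured $t$ is miscounted and that, when $s\ge 1$, a reduced remainder can never equal $t$). Once that identity is nailed down, both inequalities drop out immediately.
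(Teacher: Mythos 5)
Your proof is correct and takes essentially the same route as the paper: you track the $s\ell+k$ manufactured copies of $t$ and the $\ell$ remainders (of which $d$ vanish and are deleted), which is precisely the paper's observation that each part $\pi_i$ contributes $\lceil \pi_i/t\rceil\ge 1$ parts to $\mu$ for (i), and that the parts of $\mu$ smaller than $t$ are exactly the positive remainders of the parts of $\pi$ for (ii), with the Kronecker delta supplied by the nonemptiness of $\pi$. The only difference is cosmetic: the paper's per-part contribution argument for (i) avoids your case split on $s=0$ versus $s\ge 1$.
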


\begin{proof}
First, (i) is almost trivial. Under $\phi$, each part of $\pi$ splits into its residue modulo $t$ and as many $t$'s as the quotient, i.e.,  each part $\pi_i$ contributes $\lceil \pi_i/t\rceil$ to the number of parts of $\mu$. Thus $\ell(\pi)\le \ell(\mu)$. 

Next, we prove (ii). If all of the parts of $\mu$ are $t$, i.e.,  $\ell(\mu)=m(\mu)$,  then  
$$ \ell(\mu)- m(\mu)+ \delta_{\ell(\mu), m(\mu)}=1 \le \ell, $$
where the last inequality follows from the fact that $\pi$ is nonempty. 

We now suppose that $\mu$ has a part not equal to $t$, i.e, $\ell(\mu)-m(\mu)\ge 1$.  From the definition of $\phi$, we know that the parts of $\mu$ not equal to $t$ are the positive remainders of the parts of $\pi$, so at most $\ell$ parts of $\mu$ are not equal to $t$. Hence 
$$\ell(\mu)-m(\mu)+ \delta_{\ell(\mu), m(\mu)} = \ell(\mu)-m(\mu) \le \ell.$$ This completes the proof of (ii).
\end{proof}

It follows from Lemma~\ref{le:3.2} that 
\begin{equation}
\delta_{\ell(\mu), m(\mu)}  \le \ell(\pi)-\big(\ell(\mu)-m(\mu)\big) \le m(\mu), \label{remainder}
\end{equation}
where $\ell(\pi)-\big(\ell(\mu)-m(\mu)\big)$ is the number of  multiples of $t$ in $\pi$. 

\begin{lemma}\label{le:2.1}
Let $n$ be a fixed positive integer, and $n'$ a fixed nonnegative integer. Then the following system of equations
\begin{equation} 
\begin{cases}x+y &=n, \\s\, x+(s+1)y&=n'\end{cases} \label{eqs}
\end{equation}
has exactly one simultaneous solution $(x,y,s)\in \mathbb{Z}_{>0}\times\mathbb{Z}_{\ge0}\times\mathbb{Z}_{\ge0}$.
\end{lemma}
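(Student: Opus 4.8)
The plan is to eliminate $x$ and $y$ so that the whole system becomes a single constraint on $s$. Rewriting the second equation as $s(x+y)+y=n'$ and using $x+y=n$ gives $y=n'-sn$; the first equation then forces $x=n-y=(s+1)n-n'$. Hence a triple $(x,y,s)$ solving \eqref{eqs} is completely determined by the value of $s$, and conversely every choice of $s$ produces exactly one candidate pair $(x,y)$, which is automatically integral since $n,n',s\in\mathbb{Z}$. So the problem reduces to counting the admissible values of $s$.

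Next I would translate the sign requirements into inequalities for $s$, using $n\ge 1$ to divide. The condition $y\ge 0$ is equivalent to $sn\le n'$, i.e.\ $s\le n'/n$, and the condition $x>0$ is equivalent to $(s+1)n>n'$, i.e.\ $s>n'/n-1$. Therefore the admissible integers $s$ are precisely those lying in the half-open interval $(n'/n-1,\;n'/n]$. This interval has length $1$, so it contains exactly one integer, namely $s=\lfloor n'/n\rfloor$; this gives uniqueness. Finally, since $n'\ge 0$ and $n\ge 1$ we have $\lfloor n'/n\rfloor\ge 0$, so this $s$ indeed lies in $\mathbb{Z}_{\ge 0}$, and the associated $x=(s+1)n-n'$ and $y=n'-sn$ lie in $\mathbb{Z}_{>0}$ and $\mathbb{Z}_{\ge 0}$ respectively by construction. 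This exhibits exactly one simultaneous solution.

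There is no genuinely hard step here; the only point that needs care is the boundary case $n\mid n'$, where $n'/n$ is itself an integer. One must check that the right endpoint $s=n'/n$ is admissible — it gives $y=0$, which is allowed since we only demand $y\ge 0$ — while $s=n'/n-1$ is excluded because it would force $x=0$, which is not allowed as $x$ must be strictly positive. This asymmetry is exactly what makes the relevant interval half-open in the correct direction, and hence what makes the count come out to exactly one rather than zero or two.
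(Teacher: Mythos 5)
Your proposal is correct and follows essentially the same route as the paper's own proof: solve for $y=n'-sn$, translate $x>0$ and $y\ge 0$ into $n'/n-1<s\le n'/n$, and conclude $s=\lfloor n'/n\rfloor$ is the unique admissible value. Your explicit attention to the boundary case $n\mid n'$ and the check $s\ge 0$ are fine refinements but not a different argument.
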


\begin{proof}
We readily see that $y=n'-s\, n$.  Also, since $x>0$ and $y\ge 0$, it follows from the first equation that  $0\le y<n$.  Hence 
$$\frac{n'}{n}-1<s\le \frac{n'}{n},$$
from which it follows that $s=\lfloor n'/n\rfloor$. Therefore, there is only one solution $(x, y, s)$.
\end{proof}

We are now ready to determine how many pre-images an overpartition in $\op_t$ has. 

\begin{theorem}\label{th:3.1}
Let $\mu$ be a nonempty overpartition in $\op_t$.  

{\rm (i)} If $\ell(\mu)=m(\mu)$, 
then  there are exactly $2m(\mu)$ pre-images in $\og_t$ under $\phi$.  Moreover, of those pre-images, exactly $m(\mu)$ pre-images have no overlined parts, and the other $m(\mu)$ pre-images have the first occurrence of the smallest parts overlined.

{\rm (ii)} If  $\ell(\mu)>m(\mu)$, 
then there are exactly $2m(\mu)+1$ pre-images in $\og_t$ under $\phi$. 
Moreover, of those pre-images, exactly $m(\mu)+1$ pre-images have the same number of overlined parts as $\mu$ and the other $m(\mu)$ pre-images have one more overlined part than $\mu$ does.
\end{theorem}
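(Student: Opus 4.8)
The plan is to describe the fiber $\phi^{-1}(\mu)$ explicitly and then read off the counts. As explained in the discussion preceding Lemma~\ref{le:2.1}, a pre-image $\pi$ is recovered from $\mu$ once we know $\ell(\pi)$, $k(\pi)$, $s(\pi)$, and whether one of the deleted zero-remainders of $\pi$ was overlined; write $\varepsilon\in\{0,1\}$ for this last bit. First I would remove two of these parameters. Since $k(\pi)$ of the $\ell(\pi)$ parts of $\pi$ have quotient $s(\pi)+1$ upon division by $t$ and the remaining $\ell(\pi)-k(\pi)$ have quotient $s(\pi)$, the number of parts equal to $t$ produced by $\phi$ is $s(\pi)\bigl(\ell(\pi)-k(\pi)\bigr)+\bigl(s(\pi)+1\bigr)k(\pi)=m(\mu)$. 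Applying Lemma~\ref{le:2.1} with $n=\ell(\pi)$, $n'=m(\mu)$, $x=\ell(\pi)-k(\pi)$, $y=k(\pi)$, $s=s(\pi)$ therefore shows that $k(\pi)$ and $s(\pi)$ are uniquely determined by $\ell(\pi)$. So the fiber is parametrized by the pair $\bigl(\ell(\pi),\varepsilon\bigr)$, and it remains to find the admissible values of $\ell(\pi)$ and, for each, the admissible values of $\varepsilon$.

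For the range of $\ell(\pi)$ I would invoke Lemma~\ref{le:3.2} and \eqref{remainder}: the number of multiples of $t$ occurring in $\pi$ equals $\ell(\pi)-\bigl(\ell(\mu)-m(\mu)\bigr)$ and lies between $\delta_{\ell(\mu),m(\mu)}$ and $m(\mu)$. In case (i) we have $\mu=(t,t,\dots,t)$ with $m(\mu)$ parts, so $\ell(\pi)$ runs through $\{1,\dots,m(\mu)\}$, every pre-image consists only of copies of $s(\pi)t$ and $(s(\pi)+1)t$ with $s(\pi)\ge1$, and in particular each pre-image contains at least one multiple of $t$. In case (ii), setting $p:=\ell(\mu)-m(\mu)\ge1$, the value $\ell(\pi)$ runs through $\{p,p+1,\dots,p+m(\mu)\}$, and the number of multiples of $t$ in $\pi$ is $\ell(\pi)-p\in\{0,1,\dots,m(\mu)\}$. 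Existence of a pre-image for each admissible $\ell(\pi)$ is also needed: the nonzero remainders of $\pi$ must be exactly the parts of $\mu$ below $t$, and there is a unique way to split them between the two quotient tiers and to fill the remaining slots with copies of $s(\pi)t$ and $(s(\pi)+1)t$ so that $\pi$ is weakly decreasing and $\pi_1-\pi_{\ell(\pi)}\le t$ --- moving a positive remainder into the $(s(\pi)+1)$-tier while a zero remains in the $s$-tier would force the difference above $t$.

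Next I would count the admissible values of $\varepsilon$, the crux being that among the multiples of $t$ occurring in $\pi$ exactly one position is eligible to carry an overline. If $\pi$ contains both $s(\pi)t$ and $(s(\pi)+1)t$, then $\pi_1=(s(\pi)+1)t$ and $\pi_1-\pi_{\ell(\pi)}=t$, so by the definition of $\og_t$ no copy of $(s(\pi)+1)t$ may be overlined and only the first occurrence of $s(\pi)t$ is eligible; otherwise $\pi$ contains copies of just one multiple of $t$, whose first occurrence --- the smallest part of $\pi$ in case (i) --- is always a legitimate place for an overline. Consequently, when $\pi$ contains at least one multiple of $t$ there are exactly two pre-images for that value of $\ell(\pi)$, one with $\varepsilon=1$ and one with $\varepsilon=0$, while when $\pi$ contains none there is exactly one. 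Because $\phi$ transfers the overlines on the positive-remainder parts faithfully and never overlines a part equal to $t$, these two pre-images satisfy $o(\pi)=o(\mu)+1$ and $o(\pi)=o(\mu)$ respectively.

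Assembling the pieces proves the two assertions. In case (i), each of the $m(\mu)$ values of $\ell(\pi)$ contributes two pre-images, $m(\mu)$ of them with no overlined part and $m(\mu)$ with the first occurrence of the smallest part overlined, for a total of $2m(\mu)$. In case (ii), the value $\ell(\pi)=p$ contributes one pre-image with $o(\pi)=o(\mu)$, while each of the $m(\mu)$ values $\ell(\pi)\in\{p+1,\dots,p+m(\mu)\}$ contributes two, one with $o(\pi)=o(\mu)$ and one with $o(\pi)=o(\mu)+1$; this gives $2m(\mu)+1$ pre-images, of which $m(\mu)+1$ have the same number of overlined parts as $\mu$ and $m(\mu)$ have one more. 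I expect the main obstacle to be the claim in the previous paragraph that there is always exactly one overlinable multiple of $t$: this is exactly where the asymmetric clause in the definition of $\og_t$ (the largest part unoverlined precisely when the difference equals $t$) is used, and it must be dovetailed with the uniqueness of the admissible arrangement of the remainders established in the existence step.
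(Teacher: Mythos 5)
Your proposal is correct and follows essentially the same route as the paper: the fiber is parametrized by $\ell(\pi)$ (with $k(\pi),s(\pi)$ forced via Lemma~\ref{le:2.1}) together with the choice of overlining the eligible multiple of $t$, and the range of $\ell(\pi)$ comes from Lemma~\ref{le:3.2}, giving $2m(\mu)$ or $2m(\mu)+1$ pre-images with the stated overline counts. Your extra discussion of why exactly one multiple of $t$ is overlinable (via the largest-part clause when the difference equals $t$) and of the uniqueness of the tier arrangement is a welcome elaboration of steps the paper leaves implicit, not a different argument.
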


\begin{proof}
Let $\pi$ be a pre-image of $\mu$.  By Lemma~\ref{le:3.2}, we know that
\begin{equation}
 \ell(\mu)-m(\mu) +\delta_{\ell(\mu), m(\mu)} \le \ell(\pi)  \le \ell(\mu).\label{gap1}
\end{equation}
Hence, for any integer $\ell$ in this range,  we want to know how many $\pi\in\og_t$ with $\ell(\pi)=\ell$ can be pre-images of $\mu$. 

In order for $\pi$ to be a pre-image of $\mu$ with $\ell(\pi)=\ell$,  $s(\pi)$ and $k(\pi)$ must satisfy
\begin{equation}
s(\pi)(\ell-k(\pi))+(s(\pi)+1)k(\pi)=m(\mu). \label{eqs1}
\end{equation}
By the definition of $k(\pi)$, it should be less than $\ell(\pi)$, i.e.,  $\ell-k(\pi)>0$. Thus, \eqref{eqs1} is equivalent to that $\big(\ell-k(\pi), k(\pi), s(\pi) \big) $ is a solution to \eqref{eqs} with $n=\ell$ and $n'=m(\mu)$, which is unique. 

(i) Suppose that $\ell(\mu)=m(\mu)$. 
By \eqref{gap1},  there are $m(\mu)$ choices for $\ell$.  For a fixed $\ell$, $k(\pi)$ and $s(\pi)$ are uniquely determined as seen above.  With these $\big(\ell, k(\pi), s(\pi)\big)$, we can construct $\pi$, in which parts are multiples of $t$ differing by at most $t$  and there are no overlined parts. 

For each $\pi$, by having the first occurrence of the smallest parts overlined, we obtain a different pre-image. 
Therefore, the total number of pre-images must be equal to 
$2m(\mu)$ as claimed. Also, $m(\mu)$ pre-images have no overlined parts and the other $m(\mu)$ pre-images have one overlined smallest part. 

(ii) Suppose that 
$\ell(\mu)>m(\mu)$.   By \eqref{gap1},  there are $\big( m(\mu)+1\big)$ choices for $\ell$.  For a fixed $\ell$, $k(\pi)$ and $s(\pi)$ are uniquely determined.  With these $\big(\ell, k(\pi), s(\pi)\big)$, we can construct $\pi$, in which no multiples of $t$ are overlined. 

Note that from the construction of $\phi$,  $\ell(\mu)-m(\mu)$ counts the nonzero residues of the parts of $\pi$ modulo $t$. So,  if $\ell(\pi)>\ell(\mu)-m(\mu)$, then $\pi$ must have multiples of $t$ as parts.  For such $\pi$, by having the first occurrence of the smallest multiples of $t$ overlined, we obtain a different pre-image. 

Therefore, the total number of pre-images  must be equal to $\big(2m(\mu)+1\big)$ as claimed. Also, $\big(m(\mu)+1\big)$ pre-images have the same number of overlined parts as $\mu$ and the other $m(\mu)$ pre-images have one more overlined part than $\mu$ does.  
\end{proof}

Theorem \ref{th:3.1} yields
\begin{equation}
\sum_{\pi\in\og_t}z^{o(\pi)}q^{|\pi|}=\sum_{\mu \in\op_t}\left(\left(1-\delta_{\ell(\mu), m(\mu)}\right)+ (1+z) m(\mu)\right)z^{o(\mu)}q^{|\mu|}. \label{gen_qt_pt}
\end{equation}

In the following example, we present how to find all the pre-images $\pi$ of $\mu$. 
\begin{example} Let $t=3$. 

(i) Let $\mu=(3,3,3)$. Since $\ell(\mu)=m(\mu)=3$,  by Lemma~\ref{le:3.2} 
\begin{equation*}
1\le \ell(\pi) \le 3.
\end{equation*}
By solving \eqref{eqs1}, we have $\big(\ell(\pi), k(\pi), s(\pi)\big)=(1,0, 3), (2,1,1), (3, 0, 1)$, which yield
\begin{align*}
& (9), (\overline{9}), \\
&(6,3), (6, \overline{3}), \\
&(3, 3, 3), (\overline{3}, 3, 3), 
\end{align*}
respectively. There are $2m(\mu)$ pre-images.

(ii) Let $\mu=(3,3,3,\ov{1},1)$. Since $\ell(\mu)= 5$ and $m(\mu)=3$, 
by Lemma~\ref{le:3.2} 
\begin{equation*}
2\le \ell(\pi) \le 5.
\end{equation*}
By solving \eqref{eqs1}, we have $\big(\ell(\pi), k(\pi), s(\pi)\big)=(2,1,1),  (3,0, 1), (4, 3, 0), (5, 3 ,0)$, which yield
\begin{align*}
&(7, \overline{4}), \\
&(\overline{4}, 4, 3), (\overline{4}, 4, \overline{3}),\\
&(4, 3, 3, \overline{1}), (4, \overline{3}, 3, \overline{1}),\\
&(3, 3, 3, \overline{1}, 1),  (\overline{3}, 3, 3, \overline{1}, 1), 
\end{align*}
respectively.  Thus, there are $2m(\mu)+1$ pre-images. 
\end{example}

\subsection{Partition sets $\op_t$ and $\ob_t$}\label{sec2.2}

Let us recall the definition of $\ob_t$, from which it is clear that 
\begin{align}
\sum_{\beta\in\ob_t}z^{o(\beta)}q^{|\beta|}
&=(1+q^t+q^{2t}+\cdots)\left(\frac{(-zq)_t}{(q)_t}-1\right)\notag\\
&=\frac{1}{1-q^t}\left(\frac{(-zq)_t}{(q)_t}-1\right), \label{gen_bt}
\end{align}
where $o(\beta)$ denotes the number of overlined parts in $\beta$, which is indeed the number of overlined parts in the second subpartition of $\beta$. 

We now construct a map $\psi:\ob_t\to\op_t$ as follows:
\begin{enumerate}[\indent(1)]
\item First collect all parts equal to $t$ in both subpartitions and replace an overlined $t$ by a non-overlined $t$;
\item and then append the remaining parts in the second subpartition to the parts collected in (1).
\end{enumerate}
For example, $[(3),(3,3,\ov{1},1)]$ and $[(3),(\ov{3},3,\ov{1},1)]$ are both mapped to $(3,3,3,\ov{1},1)$ under $\psi$.

Let $\mu \in \op_t$.  Suppose that $\ell(\mu)=m(\mu)$, i.e., $\mu$ has parts equal to $t$ only. Then, its pre-image $\beta$ must be a bipartition of this form
\begin{equation*}
[(\underbrace{t,\ldots,t}_{m(\mu)-x}), (\underbrace{t,\ldots, t}_{x})]
\end{equation*}
for some $x>0$ with either the first occurrence or none of $t$'s in the second subpartition overlined.  Thus there are $2m(\mu)$ pre-images of $\mu$ in $\ob_t$ under $\psi$. Of those pre-images, $m(\mu)$ pre-images have the same number of overlined parts as $\mu$, and  the other $m(\mu)$ pre-images have one more overlined part than $\mu$.  

Suppose that $\ell(\mu)>m(\mu)$, i.e., $\mu$ has a part not equal to $t$. Then, its pre-image $\pi$ must be a bipartition of this form
\begin{equation*}
[(\underbrace{t,\ldots,t}_{m(\mu)-x}), (\underbrace{t,\ldots, t}_{x}, \mu_{m(\mu)+1},\ldots)]
\end{equation*}
for some $x\ge 0$ with either the first occurrence or none of $t$'s in the second subpartition overlined. Thus there are $2m(\mu)+1$ pre-images of $\mu$ in $\ob_t$ under $\psi$. Of those pre-images, $(m(\mu)+1)$ pre-images have the same number of overlined parts as $\mu$, and  the other $m(\mu)$ pre-images have one more overlined part than $\mu$. 


Therefore, it follows from the map $\psi$ that 
\begin{align}
\sum_{\mu \in\op_t}\left(\left(1-\delta_{\ell(\mu),m(\mu)}\right)+(1+z)m(\mu)\right)z^{o(\mu)}q^{|\mu|}&=\sum_{\beta\in\ob_t}z^{o(\beta)}q^{|\beta|}.
\label{gen_pt_bt}
\end{align}

By \eqref{gen_qt_pt},  \eqref{gen_bt}, and \eqref{gen_pt_bt}, 
\begin{equation*}
\sum_{n\ge 1} \sum_{m\ge 0} g_t(m,n)z^m q^n=\sum_{\pi\in \og_t} z^{o(\pi)} q^{|\pi |}=\sum_{\beta \in \ob_t} z^{o(\beta)} q^{|\beta |}=\frac{1}{1-q^t}\left(\frac{(-zq)_t}{(q)_t}-1\right),
\end{equation*}
which completes the proof of Theorem~\ref{th:1.1}.

\section{Final remarks} \label{sec3}

We remark that, by slightly modifying the first proof of \cite[Theorem 2.1]{Che2017}, we can also prove Theorem \ref{th:1.1} analytically.

Let
$${}_{r+1}\phi_s\left(\begin{matrix} a_0,a_1,a_2\ldots,a_r\\ b_1,b_2,\ldots,b_s \end{matrix}; q, z\right):=\sum_{n\ge 0}\frac{(a_0;q)_n(a_1;q)_n\cdots(a_r;q)_n}{(q;q)_n(b_1;q)_n\cdots (b_s;q)_n}\left((-1)^n q^{\binom{n}{2}}\right)^{s-r}z^n.$$
Then we will need the following identities later.

\begin{lemma}[First $q$-Chu--Vandermonde Sum {\cite[Eq.~(17.6.2)]{And2010}}]\label{le:chu}
We have
\begin{equation}\label{eq:chu}
{}_{2}\phi_{1}\left(\begin{matrix} a,q^{-n}\\ c \end{matrix}; q, cq^{n}/a\right)=\frac{(c/a;q)_n}{(c;q)_n}.
\end{equation}
\end{lemma}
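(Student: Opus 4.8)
The plan is to prove \eqref{eq:chu} by regarding it as a polynomial identity in the parameter $a$. Fixing $c$ and treating $q$ as generic, I would first note that the $k$-th term on the left of \eqref{eq:chu} equals $(a;q)_k/a^k$ times a factor not involving $a$, so after multiplication by $a^n$ it becomes $a^{n-k}(a;q)_k$ times that factor, a polynomial in $a$ of degree $n$; summing over $0\le k\le n$ shows that $a^n$ times the left side is a polynomial in $a$ of degree at most $n$. On the right, $a^n(c/a;q)_n/(c;q)_n=\frac{1}{(c;q)_n}\prod_{i=0}^{n-1}(a-cq^i)$ is a polynomial of degree $n$. Since two polynomials of degree $\le n$ that agree at $n+1$ points coincide, it then suffices to verify \eqref{eq:chu} at the $n+1$ distinct values $a=cq^j$, $j=0,1,\dots,n$.

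To do the verification I would use the shift $(cq^j;q)_k/(c;q)_k=(cq^k;q)_j/(c;q)_j$, which rewrites the left side at $a=cq^j$ as $\frac{1}{(c;q)_j}\sum_{k=0}^n\frac{(cq^k;q)_j\,(q^{-n};q)_k}{(q;q)_k}\,q^{(n-j)k}$. Expanding $(cq^k;q)_j$ in powers of $q^k$ by the terminating $q$-binomial theorem, interchanging the two summations, and collapsing the inner sum via $\sum_{k=0}^n\frac{(q^{-n};q)_k}{(q;q)_k}z^k=(zq^{-n};q)_n$ at $z=q^{n-j+l}$, I would be left with shifted factorials of the form $(q^{l-j};q)_n$. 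For $j<n$ this vanishes for every $l\in\{0,\dots,j\}$ (its factor at index $i=j-l\le n-1$ equals $0$), which matches the fact that the right side vanishes there too; for $j=n$ only the $l=0$ term survives and yields $(q^{-n};q)_n/(c;q)_n$, which is exactly the right side. This establishes \eqref{eq:chu} at all $n+1$ nodes, hence in general.

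The only substantive input is the terminating $q$-binomial theorem $(-x;q)_n=\sum_{k=0}^n\binom{n}{k}_q q^{\binom{k}{2}}x^k$, which I would quote or prove in one line by induction from the $q$-Pascal recurrence; the rest is routine bookkeeping with $q$-shifted factorials, and I expect the only mildly delicate step to be arranging the interpolation nodes $a=cq^j$ so that these sums collapse. Should one prefer the classical route, \eqref{eq:chu} also follows from the $q$-binomial theorem $\sum_{k\ge0}\frac{(\alpha;q)_k}{(q;q)_k}t^k=\frac{(\alpha t;q)_\infty}{(t;q)_\infty}$: write $\frac{(\gamma\delta t;q)_\infty}{(t;q)_\infty}=\frac{(\gamma\delta t;q)_\infty}{(\delta t;q)_\infty}\cdot\frac{(\delta t;q)_\infty}{(t;q)_\infty}$, expand all three factors, compare coefficients of $t^n$ to obtain a finite convolution identity, rewrite it as a terminating ${}_2\phi_1$ of argument $q$ using identities such as $(q;q)_n/(q;q)_{n-k}=(-1)^kq^{nk-\binom{k}{2}}(q^{-n};q)_k$, and then reverse the order of summation to pass to argument $cq^n/a$.
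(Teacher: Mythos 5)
Your argument is correct, but note that the paper does not prove this lemma at all: it is quoted directly from Andrews' chapter in the NIST handbook, \cite[Eq.~(17.6.2)]{And2010}, and used as a black box in the $q$-series proof of Theorem~\ref{th:1.1}, so there is no internal proof to compare against. Your interpolation proof stands on its own: the series terminates because of $(q^{-n};q)_k$, multiplying by $a^n$ does make both sides polynomials in $a$ of degree at most $n$, and the verification at the nodes $a=cq^j$ is accurate --- the shift $(cq^j;q)_k/(c;q)_k=(cq^k;q)_j/(c;q)_j$, the expansion of $(cq^k;q)_j$ by the terminating $q$-binomial theorem, and the collapse $\sum_{k=0}^n\frac{(q^{-n};q)_k}{(q;q)_k}z^k=(zq^{-n};q)_n$ with $z=q^{n-j+l}$ all check out, and $(q^{l-j};q)_n$ indeed vanishes for $0\le l\le j<n$ while only the $l=0$ term survives when $j=n$, matching $(q^{-n};q)_n/(c;q)_n$. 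The only points worth making explicit are that the nodes $cq^j$ are distinct only when $c\ne 0$ and $q$ is not a root of unity; since both sides are rational in $c$ and $q$, proving the identity for generic parameters suffices (and $c=0$ is trivial, both sides being $1$). Your alternative sketch at the end, extracting a finite convolution from $\frac{(\gamma\delta t;q)_\infty}{(t;q)_\infty}=\frac{(\gamma\delta t;q)_\infty}{(\delta t;q)_\infty}\cdot\frac{(\delta t;q)_\infty}{(t;q)_\infty}$ and rewriting it as a terminating ${}_2\phi_1$, is essentially the classical textbook derivation of the $q$-Chu--Vandermonde sum; either route would make the paper self-contained where it currently relies on a citation.
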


\begin{lemma}[{\cite[Eq.~(17.9.6)]{And2010}}]\label{le:32}
We have
\begin{equation}\label{eq:32}
{}_{3}\phi_{2}\left(\begin{matrix} a,b,c\\ d,e \end{matrix}; q, de/(abc)\right)=\frac{(e/a;q)_\infty(de/(bc);q)_\infty}{(e;q)_\infty(de/(abc);q)_\infty} {}_{3}\phi_{2}\left(\begin{matrix} a, d/b, d/c\\ d, de/(bc) \end{matrix}; q, e/a\right).
\end{equation}
\end{lemma}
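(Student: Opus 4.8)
The plan is to recognize \eqref{eq:32} as a classical transformation of non-terminating ${}_{3}\phi_{2}$ series --- the $q$-analogue of one of Thomae's relations for ${}_{3}F_{2}$, quoted above as \cite[Eq.~(17.9.6)]{And2010} --- and to prove it in two stages: first in the case where one upper parameter is a non-positive integral power of $q$, so that all series in sight terminate and the analytic issues disappear, and then in general, by analytic continuation in that parameter.

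\textbf{Terminating case.} Specialize $a = q^{-N}$ with $N \in \mathbb{Z}_{\ge 0}$. Then the series on the left of \eqref{eq:32} is a finite sum $\sum_{n=0}^{N}$. On the right, the identity $(xq^{N};q)_{\infty} = (x;q)_{\infty}/(x;q)_{N}$, used once with $x = e$ and once with $x = de/(bc)$, collapses the infinite-product prefactor to the finite ratio $(de/(bc);q)_{N}/(e;q)_{N}$, and the right-hand ${}_{3}\phi_{2}$ terminates as well. So \eqref{eq:32} becomes an identity between two terminating ${}_{3}\phi_{2}$ sums. I would establish it by reversing the order of summation on the left (substitute $n \mapsto N-n$), replacing each factor $(x;q)_{N-n}$ by $(x;q)_{N}\,(-q^{1-N}/x)^{n}q^{\binom{n}{2}}\big/(q^{1-N}/x;q)_{n}$, and simplifying; the reversed sum is then identified with the right-hand side using the $q$-Chu--Vandermonde evaluation of Lemma~\ref{le:chu} (possibly after a further reversal). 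Equivalently, this terminating identity is the specialization $a = q^{-N}$ of Sears' ${}_{4}\phi_{3}$ transformation, which itself rests on the $q$-Pfaff--Saalsch\"utz summation; a Thomae $q$-beta-integral representation of the ${}_{3}\phi_{2}$ gives yet another route.

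\textbf{Analytic continuation.} To remove the restriction $a = q^{-N}$, multiply \eqref{eq:32} through by $(e;q)_{\infty}(de/(abc);q)_{\infty}$, which is analytic and non-vanishing near $1/a = 0$. Put $w := 1/a$. Since $(a;q)_{n}a^{-n} = \prod_{i=0}^{n-1}(w - q^{i})$, for fixed $b,c,d,e$ and $|q|<1$ both sides of the cleared identity are analytic functions of $w$ on a neighbourhood of $w = 0$: on the left the $n$-th term is a polynomial in $w$ times an $n$-dependent constant and the series converges uniformly for $|w|$ small, while on the right every $q$-shifted factorial and infinite product containing $a$ is polynomial or entire in $w$. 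These two analytic functions agree at each point $w = q^{N}$, $N \in \mathbb{Z}_{\ge 0}$, and the points $q^{N}$ accumulate at $0$; hence they coincide, which is \eqref{eq:32}.

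The step I expect to be the main obstacle is the terminating identity. It is not settled by a single application of $q$-Chu--Vandermonde, because the argument of the series is $de/(abc)$ rather than $q$ or a balanced value: one first has to perform the reversal of summation, with its delicate bookkeeping of powers of $q$ and of signs, or to invoke Sears' transformation. After that, the analytic-continuation step is routine, provided one writes down an explicit disc about $w = 0$ on which all the relevant series converge uniformly so that the limit $N \to \infty$ may be taken term by term.
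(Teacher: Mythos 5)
The paper does not prove this lemma at all: it is quoted verbatim from the NIST handbook \cite[Eq.~(17.9.6)]{And2010} (it is the nonterminating $q$-analogue of Thomae's ${}_3F_2$ transformation, Gasper--Rahman (III.9)), so any proof you supply is extra work rather than a parallel to the paper. Your two-stage plan is a recognized way to establish such identities, and the second stage is carried out correctly: with $w=1/a$, the observation $(a;q)_n a^{-n}=\prod_{i=0}^{n-1}(w-q^i)$ makes both sides of the cleared identity analytic in $w$ on a small disc about $0$ (uniform convergence there is easy since the $n$-th terms are $O\bigl((C|w|)^{\,n-M}\bigr)$), the prefactor does collapse to $(de/(bc);q)_N/(e;q)_N$ at $a=q^{-N}$ exactly as you say, and the points $q^N$ accumulate at $0$, so the identity theorem applies; a final continuation in $w$ over the full region where both series converge finishes it.

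The genuine soft spot is the terminating case, and it is where you suspect it is. A single reversal of summation does not bring $q$-Chu--Vandermonde into play: with argument $deq^{N}/(bc)$ on the left and $eq^{N}$ on the right, reversal turns \emph{both} sides into terminating ${}_3\phi_2$'s with argument $q$, but these are ${}_3\phi_2$'s, not ${}_2\phi_1$'s, and they are not balanced, so neither Lemma~\ref{le:chu} nor $q$-Pfaff--Saalsch\"utz evaluates them; what remains after the reversal is itself one of the nontrivial terminating ${}_3\phi_2$ transformations. So the honest versions of your first stage are either to invoke Sears' ${}_4\phi_3$ transformation (equivalently $q$-Pfaff--Saalsch\"utz plus an interchange of a double sum) -- which works, but then your proof rests on a cited result of essentially the same depth as \eqref{eq:32}, i.e.\ you are back to doing what the paper does -- or to prove the terminating transformation from scratch via a double-sum argument. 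If you do the latter, note that the same double-sum technique applied directly to the nonterminating series (expand the quotient $(de/(abc)q^{n};q)_\infty^{-1}(de/(bc)q^{n};q)_\infty$ by the $q$-binomial theorem, interchange the order of summation, and apply the $q$-Gauss sum to the inner series) yields \eqref{eq:32} in one step, with no termination and no analytic continuation needed; that is the standard textbook proof and is shorter than the two-stage route you propose.
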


First, note that the generating function for partitions in $\og_t$ with smallest part equal to $r$ is
$$\frac{(1+z)q^r}{1-q^r}\frac{1+zq^{r+1}}{1-q^{r+1}}\cdots\frac{1+zq^{r+t-1}}{1-q^{r+t-1}}\frac{1}{1-q^{r+t}},$$
in which the coefficient of $z^m q^n$ counts the number of such overpartitions of $n$ with exactly $m$ overlined parts. Hence
\begin{align}
\sum_{n\ge 1}\sum_{m\ge 0}g_t(m,n)z^mq^n&=\sum_{r\ge 1}\frac{(1+z)q^r}{1-q^r}\frac{1+zq^{r+1}}{1-q^{r+1}}\cdots\frac{1+zq^{r+t-1}}{1-q^{r+t-1}}\frac{1}{1-q^{r+t}} \notag\\
&=(1+z)\sum_{r\ge 1}\frac{(q)_{r-1}(-zq)_{r+t-1}}{(q)_{r+t}(-zq)_r}q^r \notag\\
& =(1+z)q\sum_{r\ge 0}\frac{(q)_{r}(-zq)_{r+t}}{(q)_{r+t+1}(-zq)_{r+1}}q^r \notag\\
&=\frac{(1+z)q(-zq)_{t}}{(1+zq)(q)_{t+1}}\sum_{r\ge 0}\frac{(q)_{r}(q)_{r}(-zq^{t+1})_{r}}{(q)_r(q^{t+2})_{r}(-zq^2)_{r}}q^r \notag \\
&=\frac{(1+z)q(-zq)_{t}}{(1+zq)(q)_{t+1}}\ {}_{3}\phi_{2}\left(\begin{matrix} q,q,-zq^{t+1}\\ -zq^2,q^{t+2} \end{matrix}; q, q\right) \notag \\
& = \frac{(1+z)q(-zq)_{t}}{(1+zq)(q)_{t+1}}\frac{(q^{t+1})_\infty(q^2)_\infty}{(q^{t+2})_\infty(q)_\infty}\ {}_{3}\phi_{2}\left(\begin{matrix} q,-zq,q^{1-t}\\ -zq^2,q^{2} \end{matrix}; q, q^{t+1}\right) \tag{by Eq. \eqref{eq:32}}\\
&=  \frac{(1+z)q(-zq)_{t}}{(1-q)(1+zq)(q)_{t}} \sum_{r\ge 0}\frac{(-zq)_{r}(q^{1-t})_{r}}{(-zq^2)_{r}(q^2)_r}q^{r(t+1)} \notag\\
&=  -\frac{(-zq)_{t}}{(1-q^t) (q)_{t}} \sum_{r\ge 0}\frac{(-z)_{r+1} (q^{-t})_{r+1}  }{(-zq)_{r+1}(q)_{r+1}}q^{(r+1)(t+1)} \notag\\
&=  -\frac{(-zq)_{t}}{(1-q^t)(q)_{t}} \left({}_{2}\phi_{1}\left(\begin{matrix} -z,q^{-t}\\ -zq \end{matrix}; q, q^{t+1}\right)-1\right) \notag \\
&=  -\frac{(-zq)_{t}}{(1-q^t)(q)_{t}} \left(\frac{(q)_t}{(-zq)_t}-1\right) \tag{by Eq. \eqref{eq:chu}}\\
&= \frac{1}{1-q^t}\left(\frac{(-zq)_t}{(q)_t}-1\right). \notag
\end{align}

\subsection*{Acknowledgements}

We thank the referees for their careful reading and helpful comments. The second author was partially supported by a grant ($\#$280903) from the Simons Foundation.

\bibliographystyle{amsplain}

\end{document}